\theoremstyle{plain}
\newtheorem{thm}{Theorem}
\newtheorem{theorem}{Theorem}[section]
\newtheorem{lemma}{Lemma}
\theoremstyle{definition} \theoremstyle{definition}
\newtheorem{rem}[theorem]{Remark}           
\theoremstyle{remark}
\newcommand{\A}{\mathbb{A}}
\newcommand{\C}{\mathbb{C}}
\newcommand{\Hom}{{\rm Hom}\,}
\def\M{{\rm M}}
\def\GSp{{\rm GSp}}
\def\PGSp{{\rm PGSp}}
\def\Sp{{\rm Sp}}
\def\GO{{\rm GO}}
\def\GL{{\rm GL}}
\def\GSO{{\rm GSO}}
\def\SO{{\rm SO}}
\def\O{{\rm O}}
\def\tr{{\rm tr\,}}
\begin{document}

\title{A remark on the failure of multiplicity one for $\GSp(4)$}
\author{Daniel File and Ramin Takloo-Bighash}
\vspace{2mm}

\dedicatory{To the memory of Ilya Piatetski-Shapiro}
\address{D. F.: Departement of Mathematics, 14 MacLean Hall, Iowa City, Iowa 52242-1419}
\email{daniel-file@uiowa.edu}
\address{R. T.-B.: Department of Mathematics, Statistics, and Computer Science, University of Illinois at Chicago,
851 S. Morgan St, Chicago, IL 60607} \email{rtakloo@math.uic.edu}
\thanks{
The second author's research is partially supported by the NSF,
NSA, and a research grant from the University of Illinois at
Chicago. We wish to thank Dipendra Prasad for
extremely useful communications} \keywords{Bessel models, Weil
representation, Theta correspondence} \maketitle
\date{}
\begin{abstract}
We revisit a classical result of Howe and Pitatski-Shapiro on the failure of strong multiplicity one for $\GSp(4)$.
\end{abstract}

\section{Introduction}

Let $F$ be a number field. The strong multiplicity one theorem for
cuspidal automorphic representations of $\GL(n)$ due to
Piatetski-Shapiro \cite{Pia} states that if  $\pi_1=\otimes_v
\pi_{1,v}$ and $\pi_2=\otimes_v \pi_{2,v}$ are cuspidal
automorphic representations of $\GL(n)$ such that for all but finitely many $v$ we have $\pi_{1,v} \cong
\pi_{2,v}$, then $\pi_1 = \pi_2$. In his proof Piatetski-Shapiro
used the uniqueness of the Whittaker model. The $n=2$ case of this
theorem was proved by Casselman \cite{C}.

The strong multiplicity one theorem does not hold for other
classical groups,  such as symplectic groups. When two
representation $\pi_1$ and $\pi_2$ have the property that
$\pi_{1,v} \cong \pi_{2,v}$ for all $v$ outside of a finite set of
places of $F$, they are said to be {\em nearly equivalent}. Howe
and Piatetski-Shapiro \cite{H-PS} constructed examples of nearly
equivalent representations for $\Sp(4)$ that are not isomorphic.
Cogdell and Piatetski-Shapiro showed that for any positive integer
$n$, there exist inequivalent cuspidal automorphic representations
$\pi_1$, \dots ,$\pi_n$ of $\PGSp(4)$ that are nearly equivalent
\cite{C-PS}. However, if $\pi_1$ and $\pi_2$ are nearly equivalent
generic automorphic representations for $\GSp(4)$, then Soudry
\cite{So} showed that $\pi_1=\pi_2$.

Let $K/F$ be a quadratic extension, and let $T$ be a torus in
$\GL(2)$ $F$-isomorphic to $K^\times$. Let $\chi$ be a
grossencharacter of $K$. The purpose of this note is to prove the
following theorem:

\begin{thm}\label{thm1}
Let $F$ be a totally real number field, and let $S$ be a non-empty
finite set of inert primes of $K/F$ of even cardinality. Then there are
two automorphic cuspidal representations of $\GSp(4)$ $\pi=\otimes_v' \pi_v,
\pi' = \otimes_v' \pi_v'$ such that
\begin{itemize}
\item $\pi$ is generic, but has no $(T, \chi)$-Bessel model;
\item $\pi'$ is not generic, but has a $(T, \chi)$-Bessel model;
\item for all $v \not\in S$, $\pi_v \simeq \pi_v'$;
\item for all $v \in S$, $\pi_v \not\simeq \pi_v'$.
\end{itemize}
\end{thm}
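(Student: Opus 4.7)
The plan is to realize $\pi$ and $\pi'$ as global theta lifts to $\GSp(4)$ from two inequivalent forms of $\GO(V)$, where $V$ is a $4$-dimensional quadratic space over $F$ of discriminant $K$. Since $|S|$ is even, a quaternion algebra $D/F$ ramified at exactly the primes in $S$ exists. Taking $B_1 = M_2(F)$ and $B_2 = D$, one builds $4$-dimensional quadratic spaces $V_1, V_2$ of discriminant $K$, with similitude groups $\GSO(V_i)$ realized as concrete quotients involving $B_i^{\times}$ and $K^{\times}$. The spaces $V_1$ and $V_2$ are locally isomorphic outside $S$ and locally inequivalent at every $v \in S$.

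Next I produce the automorphic inputs. Fix a cuspidal automorphic representation $\tau$ of $\GL_2(\A_F)$ whose local components at every $v \in S$ are discrete series, so that Jacquet--Langlands yields a cuspidal partner $\tau^{JL}$ on $D^{\times}(\A_F)$. Together with the grossencharacter $\chi$, the pairs $(\tau, \chi)$ and $(\tau^{JL}, \chi)$ assemble into cuspidal representations $\sigma_1$ on $\GSO(V_1)(\A_F)$ and $\sigma_2$ on $\GSO(V_2)(\A_F)$ that are nearly equivalent and disagree at every $v \in S$. One further arranges, by choosing the local components of $\tau$ at $S$, that the Waldspurger toric period of $(\tau, \chi)$ vanishes while that of $(\tau^{JL}, \chi)$ does not; by Waldspurger's theorem this reduces to pinning down the local root numbers of $\tau_v \otimes \chi_v$ at $v \in S$ and can be done while keeping $L(1/2, \tau \otimes \chi) \neq 0$.

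I then extend each $\sigma_i$ to an irreducible automorphic representation of $\GO(V_i)(\A_F)$ and form the global theta lift $\pi := \Theta(\sigma_1)$, $\pi' := \Theta(\sigma_2)$ to $\GSp(4)$. Standard non-vanishing criteria (Rallis inner product formula) give cuspidality of both. Local compatibility of the theta correspondence propagates near-equivalence from the $\sigma_i$ to the $\pi_i$ away from $S$, while at $v \in S$ the local theta dichotomy forces $\pi_v \not\simeq \pi'_v$. Because $\GO(V_1)$ is quasi-split, $\pi$ inherits a Whittaker functional and is generic; because $\GO(V_2)$ is anisotropic at every $v \in S$, $\pi'$ is non-generic.

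Finally, the Bessel model claims are read off from a see-saw identity matching the $(T, \chi)$-Bessel period on $\Theta(\sigma_i)$ with the Waldspurger toric period on $\sigma_i$. The construction above makes this period nonzero for $\sigma_2$ and zero for $\sigma_1$, so $\pi'$ has a $(T, \chi)$-Bessel model and $\pi$ does not. The hard part will be matching, at the local level, the quaternion algebra $D$ to the torus--character pair $(T, \chi)$ through local $\epsilon$-factors so that the dichotomy points in the desired direction; for this one relies on the local analysis of Bessel models for $\GSp(4)$ in terms of characters on quaternion algebras due to Prasad and Takloo--Bighash.
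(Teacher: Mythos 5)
Your overall strategy --- compare theta lifts to $\GSp(4)$ from a split and a non-split form of $\GO$ in four variables, using Bessel periods matched via see-saw to Waldspurger toric periods to distinguish the two lifts --- is the same as the paper's. However, there are two genuine problems with the way you set it up.

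First, your description of the orthogonal similitude groups is not correct. For a four-dimensional quadratic space $V$ of \emph{trivial} discriminant, one has $V \cong B$ (a quaternion algebra) with its reduced norm form, and $\GSO(V) \cong (B^\times \times B^\times)/\GL_1$; this is the setting used in \cite{H-PS} and in this paper. An irreducible representation of $\GSO(V)$ is therefore a \emph{pair} $(\pi_1,\pi_2)$ of representations of $B^\times$ with the same central character, not a pair consisting of a $\GL_2$ representation $\tau$ and a Hecke character $\chi$ of $K^\times$. Your ``quadratic space of discriminant $K$ with $\GSO(V)$ a quotient involving $B^\times$ and $K^\times$'' does not correspond to any $\GSO$ of a quaternary space; for discriminant $K$ one would get a group built from $\GL_2(K)$, not $\GL_2(F)\times K^\times$, and the dimension count $(4+2-1=5)$ does not match $\dim\GSO(V)=7$. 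You need to replace $(\tau,\chi)$ by a pair $(\pi_1,\pi_2)$ of cuspidal representations on $D^\times$ and take their Jacquet--Langlands lifts $(\pi_1^{JL},\pi_2^{JL})$ on $\GL_2$; the torus $T\cong K^\times$ and the character $\chi$ then enter only through the Bessel/Waldspurger periods, not through the definition of $\GSO$.

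Second, the non-vanishing argument needs to be tightened, and here the paper's organization is different from yours in a meaningful way. You propose to produce $\tau$ on $\GL_2$ whose toric period vanishes (by an $\epsilon$-factor sign condition at $S$) while $L(1/2,\tau\otimes\chi)\neq 0$ and the period of $\tau^{JL}$ is non-zero; but the existence of such $\tau$ with prescribed local behaviour at $S$ \emph{and} non-vanishing central $L$-value is itself a theorem that you would have to supply. The paper avoids this issue by invoking the Prasad--Schulze-Pillot globalization theorem (Theorem \ref{thm:global}) to directly manufacture a pair $\pi_1,\pi_2$ of cuspidal representations of $D^\times(\A)$, \emph{both} possessing non-vanishing $(T,\chi)$-Waldspurger periods, with prescribed inequivalent supercuspidal components at an auxiliary split place. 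The Bessel period see-saw then immediately gives non-vanishing of the theta lift $\theta(\pi_1,\pi_2)$ from the anisotropic $\GO(4)$. The non-vanishing of the split-side lift $\theta(\pi_1^{JL},\pi_2^{JL})$ is then obtained not via the Rallis inner product formula (which would require controlling an $L$-value) but via Takeda's non-vanishing theorem for generic representations together with the observation that local non-vanishing everywhere follows from Lemma \ref{lemma1}. Your appeal to ``Rallis inner product formula'' for the generic lift is precisely the point that needs Takeda's theorem and the local non-vanishing bootstrapped from the anisotropic side; without that, the $L$-value question is unaddressed. If you fix the group-theoretic description of $\GSO(V)$ and replace the $\epsilon$-factor/$L$-value construction of $\tau$ by the globalization theorem, your argument would match the paper's.
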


The restriction on the set $S$ is to guarantee the existence of a non-split quaternion algebra $D$ over $F$ containing $K$
ramified precisely at the places in $S$. We fix this quaternion algebra throughout the paper.

\

As in the classical paper of \cite{H-PS} the proof of this theorem
uses theta correspondence from two different orthogonal groups.
Here, however, Bessel coefficients, locally and globally, play a
prominent role. One of the main difficulties in \cite{H-PS} is constructing
representations on a non-split orthogonal group whose theta lift
to $\GSp(4)$ is non-zero. We use a globalization theorem of Prasad
and Schulze-Pillot to construct certain cuspidal representations
on the non-split orthogonal group. Using Bessel coefficients we prove
that these representations have non-zero theta lift
to $\GSp(4)$.

\

That the strong multiplicity one theorem for $\GSp(4)$ fails is of
course well-known, see e.g. \cite{H-PS}, \cite{Ro}. The
contribution of this modest note, if any, is to show how easily
multiplicity one fails and how prevalent this phenomena is.

\

A bit of notation. In this paper $\GSp(4)$ is the group of
similitude transformations of a four dimensional symplectic space.
If $D$ is a quaternion algebra, the Jacquet-Langlands transfer of
a representation $\pi$ of $D^\times$, locally and globally, is
denoted by $\pi^{JL}$. If $\pi$ is a representation of $\GL(2)$,
again locally and globally, its Jacquet-Langlands transfer to
$D^\times$ is denoted by $\pi_{JL}$. If $\pi$ is not square
integrable, we define $\pi_{JL}=\{0\}$.

\

This paper is organized as follows. Section \ref{sect:background}
contains preliminaries. The proof of the theorem is presented in
Section \ref{sect:proof}.

\section{Background and preliminaries}\label{sect:background}
\subsection{Orthogonal groups}
Let $V$ be the vector space $\M_2$, of the two by two matrices,
equipped with the quadratic form $\det$. Let $(,)$ be the
associated non-degenerate inner product, and $H = \GO(V, (,))$ be
the group of orthogonal similitudes of $V$, $(,)$. The group
$\GL(2) \times \GL(2)$ has a natural involution $t$ defined by
$t(g_1, g_2) = (^t g_2 ^{-1}, ^t g_1^{-1})$, where the superscript
$t$ stands for the transposition. Let $\tilde{H} = (\GL(2) \times
\GL(2)) \rtimes <t>$ be the semi-direct product of $\GL(2) \times
\GL(2)$ with the group of order two generated by $t$. There is a
sequence
\begin{equation}
1 \longrightarrow \mathbb{G}_m \longrightarrow \tilde{H}
\longrightarrow H \longrightarrow 1,
\end{equation}
where the homomorphism $\rho: \tilde{H} \rightarrow H$ is defined
by $\rho(g_1, g_2)(v) = g_1 v g_2^{-1}$, and $\rho(t) v = \,^t v$,
for all $g_1, g_2 \in \GL(2)$ and $v \in V$. Also, $\mathbb{G}_m
\rightarrow \tilde{H}$ is the natural map $z \mapsto (z,z)\times 1
$. It follows that the image of the subgroup $\GL(2) \times \GL(2)
\subset \tilde{H}$ under $\rho$ is the connected component of the
identity of $H$. We usually denote $H$ by $\GO(2,2)$.

Similarly, if $D$ is a non-split quaternion algebra we may repeat the above discussion
 with the reduced norm instead of $\det$. The corresponding general orthogonal group is denoted by $\GO(4)$.

\

Let $X$ be four dimensional and either split or anisotropic. Then
a representation $\pi$ of $\GSO(X)$ over a local field is given by
a pair of representations $\pi_1, \pi_2$ of either $\GL(2)$ or
$D^\times$ with the same central character. If $\pi_1 \not\simeq
\pi_2$, the $\pi$ is called \emph{regular}; otherwise, we call it
\emph{invariant}. If $\pi$ is regular, then
$$
ind_{\GSO(X)}^{\GO(X)} \pi
$$
is irreducible and is denote by $(\pi_1, \pi_2)^+$. We also
formally set $(\pi_1, \pi_2)^-=\{0\}$. In contrast, if $\pi$ is
invariant, this induction is reducible and has two irreducible
pieces $(\pi_1^+, \pi_2)$, $(\pi_1, \pi_2)^-$. There is a
canonical characterization of the representations $(\pi_1,
\pi_2)^\pm$. Pick an anisotropic vector $y \in X$ and let $Y$ be
its orthogonal complement. The stabilizer of $y$ in $\O(X)$ is
$\O(Y)$. We say $\pi$ is distinguished if
$$
\Hom_{\SO(Y)}(\pi, 1) \ne 0.
$$
If $\pi$ is distinguished then the above $\Hom$ space is in fact
one dimensional. Then
$$
\dim_\C \Hom_{\O(Y)}(\pi^+, 1) + \dim_\C \Hom_{\O(Y)}(\pi^-, 1) =
1.
$$
By definition $\pi^+$ is the representation that affords the
non-zero functional. Equivalent characterizations of $\pi^\pm$
appear in \cite{H-PS}. Then in the cases of our interest an
invariant representation is distinguished.

\

There is an explicit transfer of automorphic representations, a
Jacquet-Langlands transfer (``{\em JL}") from $\GO(4)$ to
$\GO(2,2)$. We follow \S 7 of \cite{H-PS} where the map is defined
to be
$$
(\pi_1, \pi_2)^{\pm} \mapsto (\pi_1^{JL}, \pi_2^{JL})^{\pm},
$$
locally. The global map is constructed by patching local maps
together.

\subsection{Theta correspondence} Our reference for theta correspondence for dual pairs $(\GO(X), \GSp(W))$
is \cite{Harris-Kudla}. For the particular case of our interest
where $\dim X = \dim W =4$ the local non-archimedean theory has
been worked out in \cite{Roberts}. We give a brief summary of this
theory.

The main theorem of \cite{Roberts} asserts:
\begin{thm}\label{thm:distinguished}
Let $\sigma$ be an irreducible admissible representation of
$\GO(X)$ with $X$ a four dimensional quadratic space over a
non-archimedean local field. Then $\sigma$ has non-trivial theta
lift to $\GSp(4)$ if and only if $\sigma$ is not of the form
$\pi^-$ for some distinguished irreducible admissible
representation $\pi$ of $\GSO(X)$.
\end{thm}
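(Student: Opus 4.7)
The plan is to split the theorem into two pieces: first verify non-vanishing of the theta lift at the level of the connected group $\GSO(X)$, and then use a seesaw argument to determine which extension to $\GO(X)$ carries the lift in the invariant case.

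The first piece is to show that every irreducible admissible $\pi = (\pi_1, \pi_2)$ of $\GSO(X)$ has a non-zero theta lift to $\GSp(4)$. In the split case $\GSO(X) = \GSO(2,2)$, the lift is a non-zero subquotient of the $\GSp(4)$ representation induced from a parabolic whose Langlands parameter is $\mathrm{rec}(\pi_1)\oplus\mathrm{rec}(\pi_2)$. In the anisotropic case $\GSO(4)$, non-vanishing follows by transporting this result through the local Jacquet--Langlands correspondence. Howe duality for the linear pair $(\GSO(X), \GSp(4))$ then guarantees that $\theta_{\GSp(4)}(\pi)$ is irreducible. Once this is in hand, the regular case of the theorem is immediate: if $\pi_1\not\simeq\pi_2$, then $\sigma = (\pi_1,\pi_2)^+$ equals $\mathrm{ind}_{\GSO(X)}^{\GO(X)}\pi$, and Frobenius reciprocity upgrades $\theta(\pi)\neq 0$ to $\theta(\sigma)\neq 0$; the $(\pi_1,\pi_2)^- = 0$ case is vacuous by convention.

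The main obstacle is the invariant case $\pi_1\simeq\pi_2$, where $\mathrm{ind}_{\GSO(X)}^{\GO(X)}\pi = \pi^+\oplus\pi^-$ and one has to show that exactly $\pi^+$ survives. I would fix an anisotropic $y\in X$ with orthogonal complement $Y$ and invoke the seesaw
\[
\begin{array}{ccc}
\O(X) & & \Sp(W)\times\Sp(W) \\
\cup & & \cup \\
\O(Y)\times\O(Fy) & & \Sp(W)
\end{array}
\]
to translate an equivariant map from the Weil representation $\omega$ to $\sigma\otimes\theta(\sigma)^\vee$ into an $\O(Y)$-invariant functional on a lower-dimensional Weil representation tensored with $\theta(\sigma)$. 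Combined with the defining property of $\pi^\pm$, namely $\Hom_{\O(Y)}(\pi^+,1)\neq 0$ and $\Hom_{\O(Y)}(\pi^-,1)=0$, this should force $\theta(\pi^-)=0$ and, when $\pi$ is distinguished, $\theta(\pi^+)\neq 0$.

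The genuine technical difficulty lies in matching the action of the nontrivial element $t\in\GO(X)\setminus\GSO(X)$ on the Weil representation, in a mixed model adapted to the decomposition $X = Y\oplus Fy$, with the sign that defines $\pi^+$ versus $\pi^-$. Carrying this out requires explicit formulas for $\omega$ on an appropriate Schr\"odinger model and a careful bookkeeping of how the outer involution $t$ permutes the two factors of $\GL(2)$ or $D^\times$; once that sign matching is in place, the seesaw output really does single out the distinguished piece $\pi^+$ and completes the argument.
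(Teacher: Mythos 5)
The paper does not give an independent argument here; it simply cites Theorem 6.8 of Roberts \cite{Roberts}. Your outline does capture the structural skeleton of Roberts' actual proof: reduce to the connected group, use Frobenius reciprocity in the regular case, and use a seesaw against $\O(Y)\times\O(Fy)$ in the invariant case. However, as written the seesaw step has a gap that would produce the wrong statement. You invoke the ``defining property'' $\Hom_{\O(Y)}(\pi^+,1)\neq 0$, $\Hom_{\O(Y)}(\pi^-,1)=0$ to conclude $\theta(\pi^-)=0$, but that property is only available when $\pi$ is distinguished, that is, when $\Hom_{\SO(Y)}(\pi,1)\neq 0$. If $\pi$ is invariant but \emph{not} distinguished, both functionals vanish, your seesaw input is zero, and the theorem asserts that in this case both $\pi^+$ and $\pi^-$ have non-zero theta lifts. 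So the argument as you have framed it either over-reaches (forcing $\theta(\pi^-)=0$ for every invariant $\pi$, contradicting the stated biconditional) or yields nothing. Handling this correctly requires letting the anisotropic vector $y$ vary and establishing a genuine equivalence between non-vanishing of $\theta(\sigma)$ and the existence of some $Y$ for which $\sigma$ is $\O(Y)$-distinguished; that is precisely where the substance of Roberts' argument lies.

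Two secondary soft spots. ``Howe duality for the pair $(\GSO(X),\GSp(4))$'' is not a citable black box: the similitude correspondence is built by induction from the isometry groups, and irreducibility of $\theta(\pi)$ needs its own justification in this setting, which Roberts develops. And the unconditional non-vanishing of $\theta(\pi)$ for every irreducible admissible $\pi$ of $\GSO(X)$, while true in this dimension range, cannot simply be read off from Langlands parameters of a parabolic induction; it is itself a nontrivial computation carried out via Jacquet modules and the analysis of the Weil representation.
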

\begin{proof}
  Theorem 6.8 of \cite{Roberts}.
\end{proof}

\

\ In the global setting the theta lift of a generic representation
of $\GO(2,2)$ to $\GSp(4)$ is non-zero if there is no local
obstruction. More generally we have the following theorem:

\begin{thm}\label{thm:takeda}
Let $\pi_1, \pi_2$ be a pair of non-one dimensional automorphic
cuspidal representations of $D^\times$ or $\GL(2)$ over a totally
real field $F$. Let $\pi$ be an automorphic cuspidal
representation of $\GO(2,2)$ or $\GO(4)$ whose restriction to the
corresponding $\GSO$ contains $\pi_1 \otimes \pi_2$. Then the
theta lift of $\pi$ to $\GSp(4)$ is non-zero if there is no local
obstruction.
\end{thm}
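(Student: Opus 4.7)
The plan is to establish non-vanishing of the global theta lift via the Rallis inner product formula, which expresses $\langle \theta(\varphi, \Phi), \theta(\varphi, \Phi) \rangle$ as a product of local doubling zeta integrals times a global special $L$-value. The argument then splits into three tasks: (i) non-vanishing of the global $L$-factor; (ii) matching the non-vanishing of each local doubling integral with the absence of local obstructions; and (iii) for the $\GO(4)$ case with $D$ non-split, a reduction to the $\GO(2,2)$ case via Jacquet-Langlands.

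First I would handle the $\GO(2,2)$ case, where $\pi_1, \pi_2$ are cuspidal on $\GL(2)$. The global factor that arises in the Rallis formula is, up to elementary factors, $L(1, \pi_1 \times \pi_2^\vee)$ together with adjoint $L$-values $L(1, \pi_i, \mathrm{ad})$. By the non-vanishing theorems of Jacquet-Shalika on the edge of the critical strip, each of these is non-zero provided $\pi_1, \pi_2$ are cuspidal and non-one-dimensional, which is exactly our hypothesis. At a non-archimedean place, the non-vanishing of the local doubling integral is equivalent to the non-vanishing of the local theta lift, which is guaranteed by Theorem \ref{thm:distinguished} together with the local dichotomy established in \cite{Roberts}. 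At a real place of $F$ one chooses explicit test vectors using holomorphic Schwartz data, following \cite{Harris-Kudla}, to make the archimedean integral visibly non-zero; the totally real hypothesis is used precisely to access the well-behaved holomorphic discrete series theory here.

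For the $\GO(4)$ case with $D$ non-split, I would compare the Rallis formula for the $\GO(4) \to \GSp(4)$ theta lift of $(\pi_1, \pi_2)^\pm$ with that for the $\GO(2,2) \to \GSp(4)$ theta lift of $(\pi_1^{JL}, \pi_2^{JL})^\pm$. The global $L$-factor produced by the two formulas is the same, since standard and adjoint $L$-functions are invariant under the Jacquet-Langlands transfer, and the local integrals are again controlled place by place by \cite{Roberts}. Hence the $\GO(2,2)$ case suffices. A minor point requiring care is that $\pi_i^{JL}$ is cuspidal on $\GL(2)$ and non-one-dimensional, which follows from the square-integrability and non-one-dimensionality of $\pi_i$ on $D^\times$.

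The main obstacle is the archimedean analysis: producing explicit non-vanishing test vectors at the real places and verifying that the corresponding local zeta integrals are non-zero. This is technical but tractable thanks to the explicit description of the theta correspondence for the real dual pairs $(\O(4), \Sp(4,\R))$ and $(\O(2,2), \Sp(4,\R))$ in the holomorphic setting, and it is essentially this step that forces the totally real assumption on $F$.
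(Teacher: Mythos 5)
The paper does not actually prove this theorem: its proof is the one-line citation ``See Theorem 1.3 of \cite{Takeda}.'' What you have written is, in effect, a sketch of the argument contained in that reference, namely the Rallis inner product formula approach, and you have correctly identified that methodology. So at the level of ``what is the idea,'' your proposal is aligned with the source of the result, even though it is not aligned with anything written in the present paper.

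That said, there are a few places where the sketch is imprecise or would need to be repaired before it could be promoted to a proof. First, the assertion that non-vanishing of the local doubling zeta integral is ``equivalent to'' non-vanishing of the local theta lift, and that this is ``guaranteed by Theorem \ref{thm:distinguished},'' conflates two separate matters. Theorem \ref{thm:distinguished} (Roberts) tells you exactly when the local theta lift to $\GSp(4)$ is non-zero; it says nothing directly about local doubling integrals. The bridge between the two --- that a non-zero local theta lift forces a non-zero local zeta integral for a suitable choice of data --- is itself a nontrivial local theorem (a local Siegel--Weil / Rallis-type statement) and is one of the main inputs that Takeda has to establish or invoke. Second, the claim that the $\GO(4)$ case ``reduces to'' the $\GO(2,2)$ case via Jacquet--Langlands is misleading. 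It is true that the global $L$-value appearing in the Rallis formula is unchanged under the transfer, but the local zeta integrals at the finitely many places where $D$ ramifies are integrals for a genuinely different dual pair and must be analyzed on their own; one cannot transfer local non-vanishing statements through Jacquet--Langlands without an additional argument. Third, when $\pi_1\simeq\pi_2$ the restriction of $\pi$ to $\GSO$ being $\pi_1\otimes\pi_2$ does not determine $\pi$; the two extensions to $\GO$ behave very differently under theta (one of them has vanishing local theta lift by Roberts), so the ``no local obstruction'' hypothesis is doing real work here and the Rallis-formula bookkeeping has to track which extension is in play. None of these are fatal --- they are exactly the technical points the cited paper addresses --- but as written your sketch asserts them rather than proves them.

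Finally, since the theorem in the paper is a quotation rather than an original result, the cleanest thing to say is simply that your outline is a reasonable description of the strategy of \cite{Takeda}, with the caveats above, whereas the paper's own ``proof'' is just the citation.
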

\begin{proof}
See Theorem 1.3. of \cite{Takeda}.
\end{proof}

\subsection{Bessel models} Let $S \in M_2(F)$ be a symmetric matrix. We define a subgroup $T$ of $\GL(2)$ by
\begin{equation*}
T = \{ g \in \GL(2)\, \vert \, \,^t g S g = \det g. S \}.
\end{equation*}
Then we consider $T$ as a subgroup of $\GSp(4)$ via
\begin{equation*}
t \mapsto \begin{pmatrix} t \\ & \det t. \,^t t ^{-1}
\end{pmatrix},
\end{equation*}
$t \in T$.

We denote by $U$ the subgroup of $\GSp(4)$ defined by
\begin{equation*}
U = \{ u(X) = \begin{pmatrix} I_2 & X \\ & I_2 \end{pmatrix} \,
\vert \, X = \,^t X \}.
\end{equation*}
Finally, we define a subgroup $R$ of $\GSp(4)$ by $R = TU$.

Let $\psi$ be a non-trivial character of $F \backslash \A$, and define a character $\psi_S$ on $U(\A)$ by $\psi_S(u(X)) =
\psi(\tr (SX))$ for $X = \,^t X \in \M_2(\A)$. Let $\Lambda$ be a character of $T(F) \backslash T(\A)$. Denote by
$\Lambda \otimes \psi_S$ the character of $R(\A)$ defined by
$(\Lambda \otimes \psi)(tu) = \Lambda(t) \psi_S(u)$ for $t \in
T(\A)$ and $u \in U(\A)$.

Let $\pi$ be an automorphic cuspidal representation of
$\GSp_4(\A)$ and $V_\pi$ its space of automorphic functions. We
assume that
\begin{equation}\label{compatible}
\Lambda \vert_{\A^\times} = \omega_\pi.
\end{equation}
Then for $\varphi \in V_\pi$, we define a function $B_\varphi$ on
$\GSp_4(\A)$ by
\begin{equation}\label{Bessel}
B_\varphi(g) = \int_{Z_\A R_F\backslash R_\A} (\Lambda \otimes
\psi_S)(r)^{-1}. \varphi(rh) \, dh.
\end{equation}
We say that $\pi$ has a global Bessel model of type $(T, \Lambda)$ for $\pi$ if for some $\varphi \in V_\pi$, the function
$B_\varphi$ is non-zero. In this case, the $\C$-vector space of
functions on $\GSp_4(\A)$ spanned by $\{ B_\varphi \, \vert \,
\varphi \in V_\pi \}$ is called the space of the global Bessel
model of $\pi$.

Similarly, one can consider local Bessel models. Fix a local field
$F$. Define the algebraic groups $T_S$, $U$, and $R$ as above.
Also, consider the characters $\Lambda$, $\psi$, $\psi_S$, and
$\Lambda \otimes \psi_S$ of the corresponding local groups. Let
$(\pi, V_\pi)$ be an irreducible admissible representation of the
group $\GSp(4)$ over $F$. Then we say that the representation
$\pi$ has a local Bessel model of type $(T, \Lambda)$ if
there is a functional $\lambda_B \in (V_\pi^\infty)'$, a
linear functional on $V_\pi^\infty$ satisfying
\begin{equation*}
\lambda_B(\pi(r) v ) = (\Lambda \otimes \psi_S)(r)\lambda_B(v),
\end{equation*}
for all $r \in R(F)$, $v \in V_\pi$.

The fundamental properties of Bessel models in the local setting are established in \cite{P-TB}.

\subsection{Tunnell dichotomy theorem}
The following theorem is fundamental in this work:

\begin{thm}[Tunnell Dichotomy Theorem]\label{thm:tunnell} Let $F$ be local field. Let $T$ be a torus in $\GL(2)$, and $\lambda$ a quasi-character of $T(F)$. For every
irreducible admissible representation $\Pi$ of $\GL_2(F)$, we
have
\begin{equation}
\dim_\C \Hom_{T(F)}(\Pi, \lambda) + \dim_\C \Hom_{T(F)}(\Pi_{JL},
\lambda) =1.
\end{equation}
Here $\Pi_{JL}$ is the Jacquet-Langlands lift of $\Pi$ to the
unique quaternion algebra $D$ over $F$.  Hence, $\dim_\C
\Hom_{T(F)}(\Pi^{JL}, \lambda)=0$ if $\Pi$ is not discrete series,
or if $T(F)$ is split.
\end{thm}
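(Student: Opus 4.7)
This is the celebrated dichotomy theorem of Tunnell, extended by Saito to the wildly ramified setting. The strategy has three movements: a multiplicity-one bound obtained from a Gelfand-pair argument, a direct treatment of the degenerate cases in which $\Pi_{JL}$ is zero by convention, and the genuine dichotomy extracted from the local theta correspondence for the dual pair $(\GL(2), \GO(3))$ (equivalently, the Waldspurger correspondence for the pair $(\SL(2), \SO(3))$, lifted to similitudes).

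First, I would establish the upper bound $\dim_\C \Hom_{T(F)}(\Pi, \lambda) \le 1$, and likewise on the quaternionic side. This is a Gelfand-pair statement due to Prasad: one exhibits an anti-involution of $\GL_2(F)$, essentially the transposition, that preserves $T(F)$ and fixes every double coset $T(F) g T(F)$, so by Gelfand's trick the $\Hom$ space is at most one-dimensional; the same reasoning applies inside $D^\times$ using the main involution of $D$. Next I would dispose of the cases in which $\Pi_{JL} = \{0\}$ by convention, namely $T$ split or $\Pi$ not discrete series. In these cases the claim reduces to $\dim_\C \Hom_{T(F)}(\Pi, \lambda) = 1$, which one extracts by Mackey theory from the realization of $\Pi$ as a subquotient of a parabolic induction, together with asymptotic estimates guaranteeing convergence of the integral defining the functional in the remaining split-torus supercuspidal situation.

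The substantive case is $T$ non-split and $\Pi$ a discrete series. Writing $T(F) \cong K^\times$ for $K/F$ quadratic, I would attach to $(K, \lambda)$ a three-dimensional quadratic space $V_\lambda$ whose similitude orthogonal group is either $\PGL_2(F)$ or $D^\times/F^\times$, as dictated by the Hasse invariant of $V_\lambda$, itself governed by $\lambda|_{F^\times}$ and the discriminant of $K/F$. Via a see-saw identity for the dual pair $(\GL(2), \GO(V_\lambda))$ paired against $T \subset \GL(2)$ and a line in $V_\lambda$, one shows that $\Hom_{T(F)}(\Pi, \lambda) \ne 0$ if and only if $\Pi$ lies in the image of the theta lift from the split form of $\GO(V_\lambda)$, while $\Hom_{T(F)}(\Pi_{JL}, \lambda) \ne 0$ if and only if $\Pi_{JL}$ lifts from the non-split form. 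The dichotomy then follows from the completeness of the local Howe correspondence for $(\GL(2), \GO(3))$: every discrete series of $\GL_2(F)$ is the theta lift of a unique admissible representation of exactly one of the two forms of $\GO(3)$.

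The main obstacle is this last step. Identifying the nonvanishing of the toric period with the existence of a theta lift from the correct orthogonal form is the content of the local Waldspurger formula, and singling out which of the two forms of $V_\lambda$ receives the lift requires the delicate epsilon-factor dichotomy controlled by $\varepsilon(1/2, \Pi \otimes \lambda^{-1})$. Tunnell carried this out through explicit character identities in the tame case, and Saito completed it in the wildly ramified case by stationary-phase analysis on the Weil representation.
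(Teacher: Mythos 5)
The paper does not actually prove this theorem: its proof consists of the single line ``See \cite{saito} and \cite{tunnell}.'' So there is no internal argument against which to compare your sketch; I can only evaluate your sketch against the proofs in the cited sources.

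Your outline captures the structure of the result, but its third movement is not the route Tunnell and Saito take. Tunnell's proof (in odd residue characteristic) realizes a supercuspidal $\Pi$ as a representation induced from a maximal-compact-mod-center subgroup, applies Mackey theory to decompose $\Pi|_{T(F)}$, and reduces the nonvanishing of the toric period to a character computation which is then matched against an explicit epsilon-factor identity; Saito extends this to even residue characteristic. The see-saw/theta-lift argument you describe, running through three-dimensional quadratic spaces and the dual pair $(\GL(2),\GO(3))$, is the proof of Waldspurger and of Prasad, not of Tunnell--Saito. Both routes are correct, and the theta route has the advantage of being uniform across residue characteristics and of fitting naturally with the rest of the present paper (which is built on theta correspondence and Bessel/Waldspurger periods), whereas the Tunnell--Saito route is more elementary and self-contained but notoriously computational. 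If you intend to attribute your sketch to the cited references, you should either redirect the third step to match their method, or acknowledge that you are substituting the Waldspurger--Prasad argument.

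A few smaller points. The relevant epsilon factor is $\varepsilon\bigl(1/2,\Pi\otimes\mathrm{Ind}_{K}^{F}\lambda^{-1}\bigr)$ (a degree-four local factor), not $\varepsilon(1/2,\Pi\otimes\lambda^{-1})$ as written; and the sign it must equal involves $\lambda(-1)$ and the central character, so the identification is genuinely two-sided. The dichotomy also presupposes the central character compatibility $\lambda|_{F^{\times}}=\omega_{\Pi}$, without which both $\Hom$ spaces vanish and the sum is $0$ rather than $1$; the paper's statement omits this hypothesis, and your sketch inherits the omission. Finally, the claim that $\dim_{\C}\Hom_{T(F)}(\Pi,\lambda)=1$ whenever $T$ is split or $\Pi$ is not discrete series requires $\Pi$ infinite-dimensional; for a one-dimensional $\Pi=\chi\circ\det$ the restriction to $T$ is a single character and the $\Hom$ space can vanish. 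None of these is fatal, but they are exactly the hypotheses one must track to make the dichotomy literally true as an equality rather than an inequality.
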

\begin{proof}
  See \cite{saito} and \cite{tunnell}.
\end{proof}

If for some representation $\Pi$, the space
$$
\Hom_{T(F)}(\Pi, \lambda) \ne 0
$$
we say that $\Pi$ has a $(T, \chi)$-Waldspurger model. There is a similar notion in the global setting.

\subsection{A globalization theorem}

We also recall the following globalization theorem of Prasad and Schulze-Pillot \cite{P-S}:
\begin{thm}[Globalization Theorem]\label{thm:global}
Let $H$ be a closed algebraic subgroup of a reductive group $G$ both defined over a number field $F$. Let $Z$ be the identity component of the center of $G$. Assume $Z \leq H$ and that $H/Z$ has no $F$-rational characters. Let $\chi = \otimes_v' \chi_v$ be a one dimensional automorphic representation of $H(\A)$. Suppose $S$ is a finite set of non-archimedean places of $F$ and for each $v \in S$ we are given an irreducible supercuspidal representation $\pi_v$ such that
$$
\Hom_{H(F_v)}(\pi_v, \chi_v) \ne 0.
$$
Let $S'$ be a finite set of places
containing $S$ and all the infinite places, such that $G$ is quasi-split at places outside $S'$, and $\chi_v$ is unramified outside $S'$.
Then there exists an automorphic cuspidal representation $\Pi = \otimes_v' \Pi_v$
of $G(\A)$ such that
\begin{itemize}
\item $\Pi_v = \pi_v$ for $v \in S$;
\item $\Pi_v$ is unramified at all finite places of $F$ outside $S'$; and
\item there is an $f \in \Pi$ such that
$$
\int_{H(F) Z(\A) \backslash H(\A)} f(h) \chi(h)^{-1} \, dh \ne 0.
$$
\end{itemize}
\end{thm}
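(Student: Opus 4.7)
The plan is a Poincar\'e series construction: build a global test function $f=\otimes_v f_v$ on $G(\A)$ whose matrix-coefficient components at $v\in S$ concentrate on $\pi_v$, form the Poincar\'e series $P_f(g)=\sum_{\gamma\in Z(F)\backslash G(F)}f(\gamma g)$, verify that it is a cusp form with a nonzero $(H,\chi)$-period, and then spectrally extract an irreducible cuspidal constituent $\Pi$ with the required local properties.

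For the local data, at each $v\in S$ use the nonvanishing $\Hom_{H(F_v)}(\pi_v,\chi_v)\ne 0$ to choose vectors $\phi_v\in\pi_v$, $\tilde\phi_v\in\tilde\pi_v$ such that
\[
\int_{Z(F_v)\backslash H(F_v)} \langle \pi_v(h)\phi_v,\tilde\phi_v\rangle\,\chi_v(h)^{-1}\,dh \;\ne\; 0;
\]
the integral converges absolutely because $\pi_v$ is supercuspidal, so its matrix coefficients are compactly supported mod $Z(F_v)$. Set $f_v(g)=\langle \pi_v(g)\phi_v,\tilde\phi_v\rangle$. At $v\notin S'$ take $f_v=\mathbf{1}_{K_v}$ for a hyperspecial maximal compact $K_v$, and at $v\in S'\setminus S$ take $f_v$ a positive bump function near the identity of $G(F_v)/Z(F_v)$ whose support one reserves the right to shrink. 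Then $P_f$ converges absolutely and is automatically a cusp form: for any proper $F$-parabolic $P=MN$ the constant term along $P$ unfolds to an integral that at any $v\in S$ picks up $\int_{N(F_v)}f_v(ng)\,dn$, which vanishes because supercuspidal matrix coefficients have vanishing integral along every unipotent radical.

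Next consider
\[
I(f)=\int_{Z(\A)H(F)\backslash H(\A)} P_f(h)\,\chi(h)^{-1}\,dh,
\]
well defined because $H/Z$ having no $F$-rational characters implies $Z(\A) H(F)\backslash H(\A)$ has finite volume. Regrouping the sum over $\gamma\in Z(F)\backslash G(F)$ into $H(F)$-orbits yields
\[
I(f)=\sum_{[\gamma]\in H(F)\backslash G(F)/Z(F)}\int_{Z(\A)H_\gamma(F)\backslash H(\A)} f(\gamma h)\,\chi(h)^{-1}\,dh,
\]
with $H_\gamma=H\cap\gamma^{-1}H\gamma$. The identity double coset contributes the factorizable product $\prod_v\int_{Z(F_v)\backslash H(F_v)}f_v(h)\chi_v(h)^{-1}\,dh$, which is nonzero at $v\in S$ by construction, nonzero at $v\notin S'$ because $\chi_v$ is unramified and $K_v\cap H(F_v)$ has positive volume, and nonzero at $v\in S'\setminus S$ for bump functions with enough mass at the identity. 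For each $\gamma\notin H(F)$, closedness of $H$ in $G$ allows one to shrink $f_{v_0}$ at an auxiliary finite place $v_0\in S'\setminus S$ so that $\gamma\cdot\operatorname{supp}(f_{v_0})$ leaves $H(F_{v_0})$; reduction theory together with the compact support of $f$ at the remaining places ensures only finitely many $\gamma$ modulo $H(F)$ can contribute, so a sufficiently small bump at $v_0$ kills them all simultaneously. Hence $I(f)\ne 0$ for an appropriate $f$.

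Since $P_f$ is a cusp form, decompose it spectrally as $P_f=\sum_\Pi P_f^\Pi$ over irreducible cuspidal $\Pi$ with central character $\chi|_{Z(\A)}$. The nonvanishing of $I(f)$ forces some $\Pi$ to satisfy the required period nonvanishing. The remaining local conditions follow from the test function: $f_v=\mathbf{1}_{K_v}$ at $v\notin S'$ projects onto $K_v$-fixed vectors, so $\Pi_v$ is unramified there; and at $v\in S$ the convolution action of the matrix coefficient $f_v$ is, by Schur orthogonality for supercuspidals, the projection onto the $\pi_v$-isotypic component, so $\Pi_v\cong\pi_v$ for $v\in S$. The main technical obstacle is the unfolding step: simultaneously killing all non-identity orbit contributions while keeping the identity contribution nonzero. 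This is where closedness of $H$ in $G$ and reduction theory for $H/Z$ (the hypothesis on $F$-rational characters) enter essentially, and it is the part of the argument most specific to the hypotheses of the theorem; the rest is the standard Poincar\'e series / cuspidal projection machinery.
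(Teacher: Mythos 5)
The paper itself gives no argument here: the stated ``proof'' is a citation of Theorem 4.1 of Prasad--Schulze-Pillot, and your Poincar\'e-series construction is essentially a reconstruction of the route taken there (supercuspidal matrix coefficients at $v\in S$, cuspidality from vanishing of unipotent integrals, unfolding of the $(H,\chi)$-period, spectral extraction). So the strategy is the right one, but as written the sketch has gaps at exactly the places where the real work sits. The first is local: from $\Hom_{H(F_v)}(\pi_v,\chi_v)\ne 0$ you ``choose'' $\phi_v,\tilde\phi_v$ with nonzero period of the matrix coefficient. Convergence is indeed automatic from supercuspidality, but nonvanishing is not: every functional $v\mapsto\int_{Z(F_v)\backslash H(F_v)}\langle\pi_v(h)v,\tilde\phi\rangle\chi_v(h)^{-1}\,dh$ lies in the Hom space, yet a priori all of them could vanish even though the Hom space does not. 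That distinction of a supercuspidal forces some matrix-coefficient period to be nonzero is a genuine lemma (provable, e.g., by unitarizing and taking $\tilde\phi$ the conjugate of $\phi$ with a positivity argument, or via Schur orthogonality and formal degrees), and it is precisely the local ingredient the whole construction rests on; it cannot be waved through.

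The second cluster of gaps is in the unfolding and the spectral step. $H(F)$ acts on $Z(F)\backslash G(F)$ only on the right, and since $Z$ is central every stabilizer is $Z(F)$; the correct decomposition is over $Z(F)\backslash G(F)/H(F)$ with each orbit contributing $\int_{Z(\A)\backslash H(\A)}f(\gamma h)\chi(h)^{-1}\,dh$, not the two-sided formula with $H_\gamma=H\cap\gamma^{-1}H\gamma$, which belongs to a relative trace formula with two periods. (Fortuitously, the identity term you actually use, $\prod_v\int_{Z(F_v)\backslash H(F_v)}f_v\,\chi_v^{-1}$, is the correct one, because $Z(F)$ is absorbed into $Z(\A)$; it does not follow from the formula you displayed.) More seriously: a finite place $v_0\in S'\setminus S$ need not exist, and you may not shrink the support at a finite place outside $S'$ without destroying the unramifiedness conclusion, so the shrinking has to be done at an archimedean place (available since $S'$ contains them, and harmless, but it must be said); and the assertion that only finitely many classes in $Z(F)\backslash G(F)/H(F)$ meet $Z(\A)\,\mathrm{supp}(f)\,H(\A)$ is the crux of the argument and does not follow from ``reduction theory plus compact support'' as stated --- it needs closedness of $H(\A)$ in $G(\A)$, discreteness of $G(F)$, and a properness argument for the orbit map. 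Finally, since $Z(\A)H(F)\backslash H(\A)$ is in general noncompact, the period is not a continuous functional on $L^2_{\mathrm{cusp}}$, so interchanging it with $P_f=\sum_\Pi P_f^\Pi$ needs justification; the cleaner route, which also yields $\Pi_v\simeq\pi_v$ at $v\in S$ and unramifiedness outside $S'$, is to pair $P_f$ against individual cusp forms $\varphi$ and unfold the pairing into a convolution of $f$ against $\varphi$, then invoke supercuspidal matrix-coefficient orthogonality. A last bookkeeping point: the $f_v$ at $v\notin S$ must be extended to $Z(F_v)K_v$ by $\chi_v|_{Z}$ so that the sum over $Z(F)\backslash G(F)$ and the Euler factorization of the identity term make sense.
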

\begin{proof}
  See Theorem 4.1 of \cite{P-S}.
\end{proof}
We will apply this theorem in the following manner. Let $T$ be a torus embedded in the multiplicative group of a non-split quaternion algebra $D$ defined over a number field $F$.
Let $\chi = \otimes_v' \chi_v$ be an one dimensional automorphic representation of $T$. Since $T$ is anisotropic, $T/Z$ does not have any $F$-rational characters.
Let $v$ be a place where $T$ is split, and let $\pi_v$ be a supercuspidal representation of $D^\times(F_v) \simeq \GL_2(F_v)$. Then it follows from Theorem \ref{thm:tunnell} that
$$
\Hom_{T(F_v)}(\pi_v, \chi_v) \ne 0.
$$
Let $S'$ be the finite set of places consisting of places in $S$, archimedean places, places where $\chi$ ramifies, and $v$. Then the theorem asserts that there is an automorphic cuspidal representation
$\Pi = \otimes_v' \Pi_v$ of $D^\times(\A)$
unramified outside $S'$ such that $\Pi_v \simeq \pi_v$ and $\Pi$ has a global $(T, \chi)$-Waldspurger model.

\section{Proof of the theorem}\label{sect:proof}

As in \cite{H-PS} we will
use the following diagram
$$
\xymatrix{
\GO(2,2) \ar[dr]^\theta \\
& \GSp(4) \\
\GO(4) \ar[ur]^\theta \ar@{.>}[uu]^{JL}
}
$$
Here $\GO(4)$ is constructed using a non-split quaternion algebra
$D$, ramified precisely at the places in $S$, that contains the
torus $T$. Such a quaternion algebra exists if $K/F$ is an
extension of fields.  One can then consider in the local situation
the theta lift of the representations $(\pi_1, \pi_2)^+$ and
$(\pi_1^{JL}, \pi_2^{JL})^+$ to $\GSp(4)$; we denote these
representations by $\theta(\pi_1, \pi_2)$ and $\theta(\pi_1^{JL},
\pi_2^{JL})$, respectively.
\begin{lemma}\label{lemma1}
In the local situation, the representations $\theta(\pi_1, \pi_2)$
and $\theta(\pi_1^{JL}, \pi_2^{JL})$ are non-zero and are
inequivalent.
\end{lemma}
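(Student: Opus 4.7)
The plan is to treat the two assertions of the lemma separately: non-vanishing of each local theta lift, and their inequivalence as $\GSp(4)$-representations.

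For non-vanishing I would apply Theorem~\ref{thm:distinguished} directly. By construction, $(\pi_1,\pi_2)^+$ and $(\pi_1^{JL},\pi_2^{JL})^+$ are the $+$-pieces of the induction from $\GSO$ to $\GO$: in the regular case ($\pi_1 \not\simeq \pi_2$) the $-$-piece is formally zero and the induction is irreducible, while in the invariant case the $+$-piece is singled out as the one affording a non-trivial $\SO(Y)$-invariant functional. In neither case can $(\pi_1,\pi_2)^+$ or $(\pi_1^{JL},\pi_2^{JL})^+$ be written as $\tau^-$ for some distinguished $\tau$, so the dichotomy of Roberts forces both $\theta(\pi_1,\pi_2)$ and $\theta(\pi_1^{JL},\pi_2^{JL})$ to be non-zero.

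For inequivalence the meaningful local setting is a place $v$ at which $D_v$ is non-split; at a split place $D^\times_v \simeq \GL_2(F_v)$, the JL image coincides with the original, and the two lifts agree. My plan at a non-split $v$ is to exhibit a Bessel model that distinguishes the two $\GSp(4)$-representations. I would choose $T \subset \GL_2$ to be a \emph{split} torus together with a compatible character $\Lambda$; the resulting $(T,\Lambda)$-Bessel model is essentially a Whittaker model for $\GSp(4)$. By the local theta-correspondence analysis for the dual pair $(\GO(X),\GSp(4))$ of \cite{Roberts} (see also \cite{P-TB}), such a Bessel functional on the lift corresponds to a $T$-equivariant functional on the source pair $(\pi_1,\pi_2)$ or $(\pi_1^{JL},\pi_2^{JL})$ against a character determined by $\Lambda$ and the central character. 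On $\GL_2 \times \GL_2$ this split-torus functional is automatic, since generic representations admit the corresponding split-torus period. On $D_v^\times \times D_v^\times$ with $D_v$ non-split it cannot exist, since $D_v^\times$ has no split (maximal) torus at all; this is precisely the split case of Tunnell's dichotomy (Theorem~\ref{thm:tunnell}). Consequently $\theta(\pi_1,\pi_2)$ admits this Bessel model while $\theta(\pi_1^{JL},\pi_2^{JL})$ does not, so the two $\GSp(4)$-representations cannot be isomorphic.

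The main technical obstacle is the local dictionary between Bessel functionals on the $\GSp(4)$-lift and torus periods on the $\GSO(X)$-source — the theta-Bessel compatibility that underlies Roberts' local calculation. Once that bridge is in place, Tunnell's dichotomy immediately separates the split and non-split sides, and the rest of the argument is bookkeeping about the $\pm$-decomposition of the induced representations and the behaviour of the JL map on $\GO$-representations.
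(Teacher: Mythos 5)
Your argument follows essentially the same route as the paper's: non-vanishing of the local theta lifts, then a split-torus Bessel model to separate the two lifts. For the non-vanishing you invoke Theorem~\ref{thm:distinguished} directly; the paper mentions this as an alternative, but its main argument constructs a common Waldspurger model for the source pair (invoking \cite{Prasad2} when $\pi_1\not\simeq\pi_2$ on $D^\times$) and then cites Remark 6.8 of \cite{P-TB}. Your version is cleaner and avoids the appeal to \cite{Prasad2}, at the mild cost of needing to check that $(\pi_1,\pi_2)^+$ can never equal $\tau^-$ for a distinguished invariant $\tau$ (which follows by comparing restrictions to $\GSO$). The inequivalence step is identical in spirit to the paper's use of Corollary 7.1 of \cite{P-TB} together with the split case of Tunnell's dichotomy.

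One concrete error in the write-up: the final sentence of your inequivalence argument has the labels swapped. In the paper's conventions $\pi_1,\pi_2$ live on $D_v^\times$ (non-split) and $\pi_1^{JL},\pi_2^{JL}$ on $\GL_2(F_v)$, so the split Bessel functional lives on $\theta(\pi_1^{JL},\pi_2^{JL})$ and is \emph{absent} from $\theta(\pi_1,\pi_2)$ --- the opposite of what you wrote. Your own preceding sentences (``On $\GL_2\times\GL_2$ this split-torus functional is automatic... On $D_v^\times\times D_v^\times$... it cannot exist'') are correct, so this is a bookkeeping slip rather than a logical gap, but it should be fixed. A smaller point: the paper singles out $\pi^+$ by the $\O(Y)$-invariant functional, not the $\SO(Y)$-invariant one (distinction lives at the $\O(Y)$ level, distinguishedness at the $\SO(Y)$ level).
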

\begin{proof}
In the archimedean situation this lemma is well-known. The
non-vanishing of the local theta lifts follows from Remark 6.8 of
\cite{P-TB} as follows. We will show that $\theta(\pi_1, \pi_2)$
and $\theta(\pi_1^{JL}, \pi_2^{JL})$ have some non-zero Bessel
model. To see this it suffices to show that $\pi_1, \pi_2$, and
$\pi_1^{JL}, \pi_2^{JL}$ respectively, have a common Waldspurger
model. For $\GL(2)$ representations this is obvious, as by Theorem
\ref{thm:tunnell} every representation has almost all Waldspurger
models for a given torus. For $D^\times$, if $\pi_1 \simeq \pi_2$,
this is obvious; if $\pi_1 \not\simeq \pi_2$, we need to show the
existence of a torus whose trivial characters occurs in $\pi_1
\otimes \tilde{\pi}_2$. For this see \cite{Prasad2}. The
non-vanishing of the theta lift also follows from Theorem
\ref{thm:distinguished}.

In order to prove that the two representations are inequivalent we
will use Bessel models. Namely we will show the existence of a
Bessel model for the representation $\theta(\pi_1^{JL},
\pi_2^{JL})$ that other representations cannot have. By Corollary
7.1 of \cite{P-TB} the representation $\theta(\pi_1^{JL},
\pi_2^{JL})$ will have a $(T, \chi)$ Bessel model if and only if
the two representations $\pi_1^{JL}$, $\pi_2^{JL}$ have $(T,
\chi)$-Waldspurger models. It follows from Theorem
\ref{thm:tunnell} that if $T$ is split, $\pi_1^{JL}$ and
$\pi_2^{JL}$ will have $(T, \chi)$-Waldspurger models. On the
other hand, by the same corollary since $\pi_1$, $\pi_2$ do not
have $(T, \chi)$-Waldspurger models, $\theta(\pi_1, \pi_2)$ cannot
have $(T, \chi)$-Bessel models.
\end{proof}

\begin{lemma}\label{lemma2}
Let $D$ be a non-split quaternion algebra over a totally real
number field $F$ which is ramified at places in a set $S$. Let
$(\pi_1, \pi_2)$ be a pair of non-isomorphic automorphic
representations of $D^\times(\A_F)$, and suppose that
$\theta(\pi_1, \pi_2)$ and $\theta(\pi_1^{JL}, \pi_2^{JL})$ are
non-zero. Then
\begin{itemize}
  \item for $v \in S$, $\theta(\pi_1, \pi_2)_v \not\simeq \theta(\pi_1^{JL}, \pi_2^{JL})_v$;
  \item for $v \not\in S$, $\theta(\pi_1, \pi_2)_v \simeq \theta(\pi_1^{JL}, \pi_2^{JL})_v$.
\end{itemize}
\end{lemma}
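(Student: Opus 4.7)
The plan is to split by whether $D$ ramifies at $v$.

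For $v \notin S$ the local quaternion algebra $D \otimes_F F_v$ is split, and we fix an isomorphism with $M_2(F_v)$ sending the reduced norm to the determinant. Under this identification the quadratic space defining $\GO(4)_v$ literally equals the one defining $\GO(2,2)_v$, so the two dual pairs and their local theta correspondences coincide. Simultaneously, the local Jacquet-Langlands transfer is the identity, $\pi_{i,v}^{JL} \simeq \pi_{i,v}$, so the source representations $(\pi_{1,v}, \pi_{2,v})^+$ and $(\pi_{1,v}^{JL}, \pi_{2,v}^{JL})^+$ are the same representation of the same group. Their theta lifts therefore agree, giving $\theta(\pi_1, \pi_2)_v \simeq \theta(\pi_1^{JL}, \pi_2^{JL})_v$.

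For $v \in S$, $D \otimes_F F_v$ is a division quaternion algebra, so $\theta(\pi_1, \pi_2)_v$ and $\theta(\pi_1^{JL}, \pi_2^{JL})_v$ come from genuinely different dual pairs. The hypothesis that the global theta lifts are non-zero passes to each local component, so both local lifts are non-zero, and we are in the setting of Lemma \ref{lemma1}. Since that lemma is a purely local statement, applying it to the local components at $v$ immediately yields $\theta(\pi_1, \pi_2)_v \not\simeq \theta(\pi_1^{JL}, \pi_2^{JL})_v$.

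The substance of the argument is therefore concentrated in the ramified case, and the main obstacle is carrying out the local Bessel/Waldspurger comparison of Lemma \ref{lemma1} at each $v \in S$. One exhibits a split torus $T_v \hookrightarrow \GL(2)(F_v)$ together with a suitable character $\chi_v$ such that each $\pi_{i,v}^{JL}$ admits a $(T_v, \chi_v)$-Waldspurger model by Theorem \ref{thm:tunnell}, while the corresponding split torus fails to embed in the division algebra $D_v^\times$, so no such model can exist on the $\pi_{i,v}$ side. Corollary 7.1 of \cite{P-TB} then translates this Waldspurger asymmetry into the existence of a $(T_v, \chi_v)$-Bessel model for $\theta(\pi_1^{JL}, \pi_2^{JL})_v$ that $\theta(\pi_1, \pi_2)_v$ lacks, forcing the local inequivalence.
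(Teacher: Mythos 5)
Your proof is correct and follows the same route the paper takes: the paper disposes of the lemma by pointing to Lemma~\ref{lemma1}, and you have simply unpacked what that reduction means — at $v\notin S$ the quaternion algebra splits, so $\GO(4)_v$ and $\GO(2,2)_v$ give the identical dual pair and $\pi^{JL}_{i,v}\simeq\pi_{i,v}$, forcing the local lifts to agree; at $v\in S$ the local non-split setting of Lemma~\ref{lemma1} applies and the split-torus Bessel/Waldspurger dichotomy via Tunnell and Corollary~7.1 of \cite{P-TB} gives inequivalence. Your closing paragraph is really a recapitulation of the proof of Lemma~\ref{lemma1} rather than new content, and the remark that nonvanishing of the global lift ``passes to each local component'' is harmless but unnecessary since Lemma~\ref{lemma1} already asserts local nonvanishing outright; otherwise this is the intended argument.
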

\begin{proof}
  This is a direct consequence of the previous lemma.
\end{proof}

We can now present the proof of the main theorem.

\begin{proof}[Proof of Theorem \ref{thm1}]
By Lemma \ref{lemma2} it suffices to find a pair of non-equivalent
automorphic representations $(\pi_1, \pi_2)$ of $D^\times(\A_F)$
such that $\theta(\pi_1, \pi_2)$ and $\theta(\pi_1^{JL},
\pi_2^{JL})$ are non-zero. In order to do this we will use the
discussion following Theorem \ref{thm:global} to find a pair
$(\pi_1, \pi_2)$ of non-equivalent representations such that both
representations have a global $(T, \chi)$-Waldspurger model.  Let
$v \not\in S$ be a place where $T_v$ is split. Let $\pi_{1, v},
\pi_{2, v}$ be two non-equivalent supercuspidal representations of
$D_v^\times \simeq \GL_2(F_v)$ with the same central character as
$\chi_v \big\vert_{Z_v}$. Then we know that there are automorphic
representations $\pi_1, \pi_2$ with $(T, \chi)$-Waldspurger models
and such that their respective local components at $v$ are
$\pi_{1, v}, \pi_{2, v}$. Since $\pi_{1, v}, \pi_{2, v}$ are
non-equivalent, we have $\pi_1 \not\simeq \pi_2$, and we are done.
By \S 13 of \cite{P-TB} the representation $\theta(\pi_1, \pi_2)$
will have a $(T, \chi)$-Bessel model and hence cannot be zero.
Note that on the other hand since $\theta(\pi_1, \pi_2)$ is
non-zero, it is locally non-zero. The local component of
$\theta(\pi_1, \pi_2)$ is $\theta(\pi_{1, v}, \pi_{2, v})$. This
then means that $\theta(\pi_{1,v}^{JL}, \pi_{2, v}^{JL}) \ne 0$.
Now since $\pi_1^{JL}, \pi_2^{JL}$ are generic, and there is no
local obstruction, the non-vanishing of global theta lift follows
from Theorem \ref{thm:takeda}.
\end{proof}

\end{document}